\def\be{\begin{equation}}
\def\ee{\end{equation}}
\def\w{\omega}
\def\x{\mathbf{x}}
\def\w{\mathbf{w}}
\def\z{\mathbf{z}}
\def\R{{\mathbb R}}
\title{An Explicit Neural Network Construction for Piecewise Constant Function Approximation}
\author{Kailiang Wu \and
 Dongbin
       Xiu\thanks{Department of Mathematics,
		The Ohio State University, Columbus, OH 43210, USA.
		{\tt wu.3423@osu.edu, xiu.16@osu.edu.}
		}
}
\begin{document}
\maketitle

\begin{abstract}
We present an explicit construction for feedforward neural network (FNN), which provides a piecewise constant
approximation for multivariate functions. The proposed FNN has two hidden layers, where the weights and thresholds are explicitly defined 
and do not require numerical optimization for training. Unlike most of the existing work on explicit FNN construction, the proposed FNN
does not rely on tensor structure in multiple dimensions. Instead, it automatically creates Voronoi tessellation of the domain, based on the
given data of the target function, and piecewise constant approximation of the function. This makes the construction more practical for applications. 
We present both theoretical analysis and numerical examples to demonstrate its properties.
\end{abstract}

\begin{keywords}
% keywords here, in the form: keyword \sep keyword
feedforward neural network, hidden layer, constructive approximation, Voronoi diagram.
%Approximation theory, randomized algorithm, noisy data
% PACS codes here, in the form: \PACS code \sep code
%\PACS
\end{keywords}

% main text
%\input Introduction

\section{Introduction} \label{sec:intro}

Feedforward neural network (FNN) has attracted wide attention in
recent years, largely due to the many successes it brings to machine
learning and artificial intelligence. There are an exceedingly large
number of literature devoted to various aspects of FNN, particularly
on their performance and algorithm design.

Most of the existing mathematical studies on FNN focus on single-hidden-layer
FNN and its ability to approximate unknown target functions. The
earlier theoretical results established that single-hidden-layer FNN
is capable for approximating functions with arbitrary accuracy, see, for example,
\cite{Cybenko89, Pinkus99, Barron93}.
Efforts have then been made to explicitly constructive single-hidden-layer
FNNs, where the weights and
thresholds are explicitly specified and not solved numerically by a
certain optimization procedure.
These explicitly defined single-hidden-layer FNNs provide very useful,
from the mathematical perspective, constructive
proofs for the existence of FNNs for function approximation.
One of the earliest constructions is the work in
\cite{CardaliaguetEuvrard92}. Since then, several other constructions
have been presented, many of which are based on variations of the
Cardaliaguet-Euvrard operator from \cite{CardaliaguetEuvrard92}. 
These work include \cite{Anastassiou1997, CaoZhangHe09, Costarelli2015,
	CosterelliSpigler2013, LlanasSainz06}, to name a few. A common
feature of these work is that the construction is typically based on a
univariate formulation, for example, the Cardaliaguet-Euvrard
operator \cite{CardaliaguetEuvrard92} in 1D. To generalize to multivariate functions, tensor product 
is employed. The complexity of the constructions thus grows
exponentially in high dimensions. Consequently, although these results are
useful from the mathematical view point, they do not provide practical tools for applications.

In this paper, we present a new explicit construction of FNN
for multivariate function approximation. A distinct feature of the
proposed construction is that it does not utilize tensor structure in
multiple dimensions. Instead, the network creates a piecewise constant
approximation for any given function based the available data. The
construction uses exclusively the threshold function, also known as hard
limiter or binary function, as the activation function.
The
weights and thresholds in the network are explicitly defined, and
we prove that the ``pieces'' in the piecewise constant approximation
form a Voronoi diagram (cf. \cite{Okabe1992}) of the
domain. Suppose one is given $n$ sample data of the unknown target
function, the proposed FNN then provides a piecewise constant
approximation based $n$ Voronoi cells of the underlying domain. We
also provides an error estimate of the approximation.
Due to the explicit construction, the weights and thresholds of the FNN can
be easily evaluated, thus avoiding a potentially expensive numerical
optimization procedure for their training. This, along with the
non-tensor structure of the construction, makes the proposed FNN a
practical tool for
applications, as it works with arbitrarily given data in arbitrary
dimensions.
We demonstrate the effectiveness of the FNN using
several numerical examples, which also verify the error convergence
from the theoretical estimate.
In the current construction, which is perhaps one of the most
intuitive ones, the network uses two hidden layers with (at most) $n^2$ neurons.
Similar FNNs
with less number of neurons are possible, by using more involved
network structures. This shall be investigated in future studies.
%Unlike most of the existing
%FNN constructions with single hidden layer, our construction includes two
%hidden layers and does not use tensor structure for multivariate
%functions. 
The proposed FNN is not only another (and new) constructive proof for
the universal approximation property of FNNs, but also a practical
tool for real data.

This paper is organized as follows. Upon a brief setup of the problem
in Section \ref{sec:setup}, we present the explicit FNN construction
in Section \ref{sec:method}, which also includes its theoretical
analysis. Numerical examples are then presented in Section
\ref{sec:examples} to demonstrate the properties of the FNN.

\section{Setup} \label{sec:setup}

Consider the problem of approximating an unknown function
$f:D\to\R$ using its samples, where $D\subseteq \R^d$, $d\geq 1$.
%and is
%equipped with a measure $\w$. 
Let ${\x}=(x_1,\dots, x_d)$ be the
coordinate and
\begin{equation} \label{samples}
\left(\x^{(1)}, f^{(1)}\right), \dots, \left(\x^{(n)}, f^{(n)}\right),
\qquad n>1,
\end{equation}
be a set of training data, where
$\x^{(k)}\in D$ are the location of the data samples and 
\begin{equation*} 
f^{(k)} = f( {\x}^{(k)})+\epsilon_k,  \qquad k=1,\dots, n,
\end{equation*} 
are the sample data, with $\epsilon_k\geq 0$ being (possible) observation error. We
consider only the nontrivial case of $n>1$. 
%Throughout this paper, we
%use $\|\cdot\|$ to denote vector 2-norm, unless otherwise noted.

%\input Method

\section{Construction of the FNN} \label{sec:method}

In this section, we present our construction of the feedforward neural
network (FNN) with two hidden layers. We first present the detail of
its structure and then prove that it provides a piecewise constant
approximation for any target function $f:\R^d\to\R$.

\subsection{The construction}

The structure of the FNN is illustrated in
Figure \ref{fig:NN4}. It consists of an input layer, which has
$d\geq 1$ neurons corresponding to the
$d$-variate input signal, and an output layer with one neuron, as the
function under consideration is
$f:\R^d\to\R$. This is the standard setup for most of the FNN.

We will use exclusively the threshold function, also known as the step
function, hard limiter function, binary function, etc, as the activation function
\begin{equation}  \label{step}
s(x) = \left\{
\begin{array}{ll}
1, & x\geq 0,\\
0, & x<0.
\end{array}
\right.
\end{equation}

\begin{figure} 
	\centering
	\includegraphics[width=0.99\textwidth]{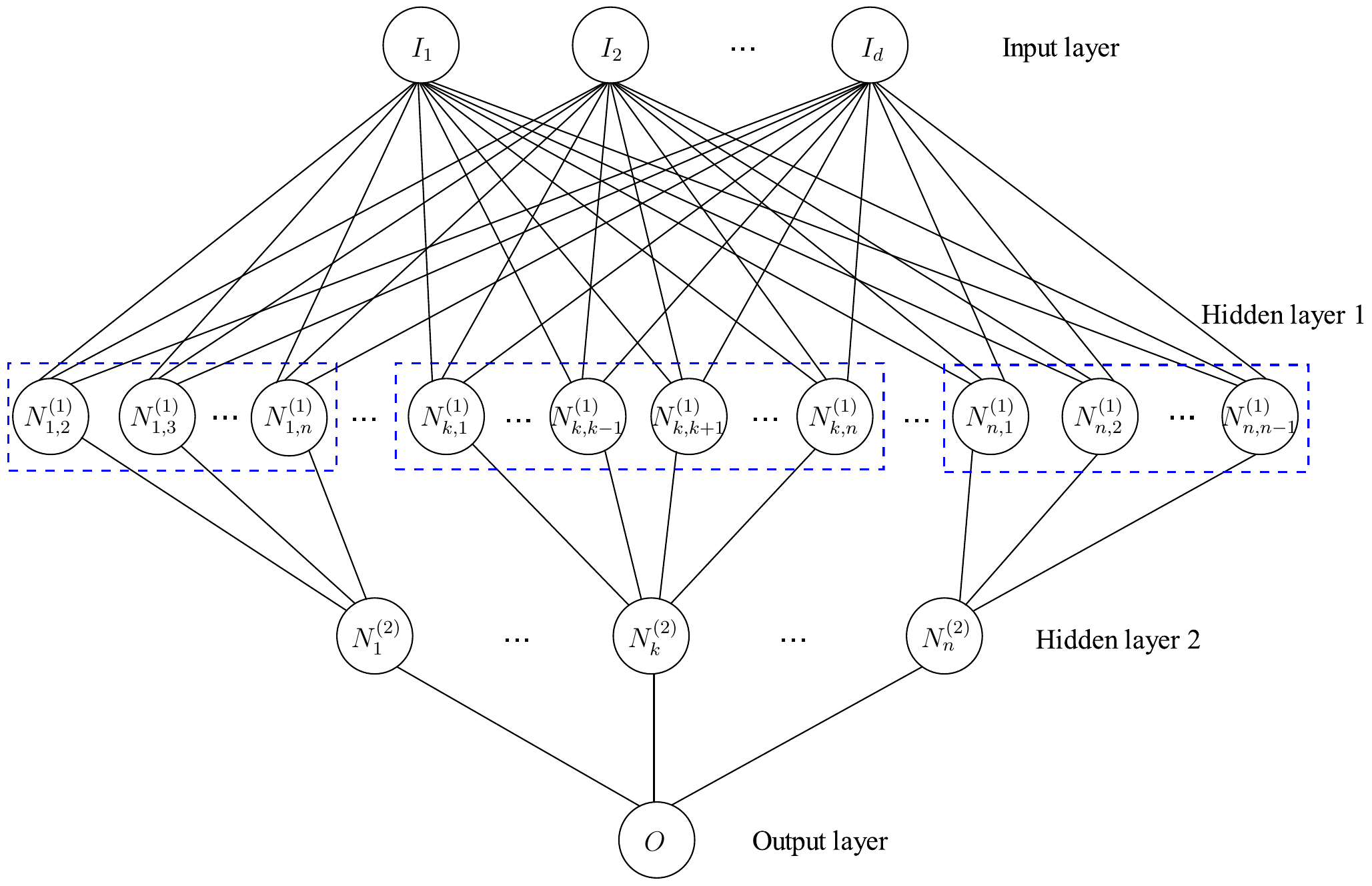}
	\caption{The schematic diagram of the FNN with two hidden
		layers.}
	%with three input neurons, two hidden layers consisting of $n(n-1)$ and $n$ neurons respectively,
	%		and one output neuron. 
	%		The biases in input layer and the first hidden layer are omitted in this figure.  
	%		Within each connection, information flows from up to bottom.}
	\label{fig:NN4}
\end{figure}

\subsubsection{First hidden layer}

The first hidden layer consists of $n(n-1)$ neurons, where $n>1$ is
the number of training samples. We shall divide the neurons into $n$
groups, with each group corresponding to a training sample. Each group
then includes $(n-1)$ neurons, thus making the total number of neurons
in the first hidden layer
$n(n-1)$.
We shall use $N^{(1)}$ to denote the neurons in the first hidden layer
and label them with two indices in the following way,
$$
N^{(1)}_{k,j}, \qquad 1\leq k \leq n, \quad 1\leq j\leq n, 
\quad j\neq k,
$$
where the first index $k=1,\dots, n$, denotes the $k$-th group, and
the second index $j\neq k$ denotes its location within the $k$-th
group. 
Note that this is merely an indexing scheme to distinctly identify the
neurons. The neurons have no lateral connections and each receive the
same signals from the input layer, as illustrated in Fig. \ref{fig:NN4}.

The output of the neurons in the first hidden layer is explicitly defined as follows,
\begin{equation} \label{z1}
z^{(1)}_{k,j} (\x) = s(\w_{k,j}\cdot \x - b_{k,j}), \qquad 1\leq k
\leq n, \quad 1\leq j\leq n, \quad j\neq k,
\end{equation}
where
\begin{equation} \label{w1}
\begin{split}
\w_{k,j}  & = \x^{(k)} - \x^{(j)}, \\
b_{k,j} &= \frac{1}{2} (\x^{(k)} - \x^{(j)})\cdot (\x^{(k)} + \x^{(j)}).
\end{split}
\end{equation}
Here $\x^{(k)}$ are the coordinates of the $k$-th training sample
\eqref{samples}. Obviously, $z_{k,j}^{(1)}\in \{0,1\}$ because of the
use of the binary activation function. We write
\begin{equation} \label{z1vector}
%\z_k^{(1)}(\x) = (z_{k,1}^{(1)}, \dots, z_{k,n}^{(1)})^\top 
{\z_k^{(1)}(\x) = \left(\dots, z_{k,k-1}^{(1)}, z_{k,k+1}^{(1)},\dots \right)^\top} 
\in \ \{0,1\}^{n-1}, \qquad 1\leq k\leq n,
\end{equation}
as the output vector of the $k$-th group, $1\leq k\leq n$.

\subsubsection{Second hidden layer}

The second hidden layer consists of $n$ neurons, each of which only receives 
the output signals from a unique group in the first hidden layer, as
illustrated in Figure \ref{fig:NN4}. 
The output of each neuron in the second hidden layer is defined as,
\begin{equation} \label{z2}
z_k^{(2)}(\x) = s(\mathbf{1}\cdot \z_k^{(1)}(\x) - b^{(2)}), \qquad
k=1,\dots, n,
\end{equation}
where
\begin{equation} \label{w2}
\mathbf{1} = (1,\dots,1)^\top \in\R^{n-1}, \qquad
b^{(2)} = n-1,
\end{equation}
and $\z_k^{(1)}(\x)$ is the output vector from the $k$-th group in the
first layer, as defined in \eqref{z1vector}.
To avoid the unexpected effect of computer round-off error, during
implementation one may
define $b^{(2)} = n-1-\epsilon$, where $0<\epsilon<1$ is an arbitrary
constant. It is obvious $z_k^{(2)}(\x)\in\{0,1\}$, $k=1,\dots,n$.

\subsubsection{Output layer}

The output layer receives signals from all the neurons in the second  hidden 
layer, as shown in Figure \ref{fig:NN4}. It produces the final output
as follow,
\begin{equation} \label{output}
y(\x) 
%= \z^{(2)}\cdot \f 
= \sum_{k=1}^n f^{(k)} \cdot z_k^{(2)}(\x),
\end{equation} 
where $f^{(k)}$, $k=1,\dots, n$, are the sample data.

\subsection{Approximation property}

It is straightforward to show that the outputs of the two hidden
layers \eqref{z2} effectively
produce a Voronoi diagram for the underlying domain $D$, where the
function $f$ is defined, and the final output \eqref{output} thus
becomes a piecewise constant approximation of $f$ based on the Voronoi diagram.
To proceed, we first invoke the concept of (ordinary) Voronoi
diagram (cf. \cite{Okabe1992}).
\begin{definition}[Voronoi diagram]
	For points $X=\{\x^{(1)},\dots, \x^{(n)}\}\subset D \subseteq \R^d$,
	where $2\leq n < \infty$ and $\x^{(i)} \neq \x^{(j)}$ for $i\neq
	j$. We call the region
	\begin{equation}\label{Voronoi}
	V^{(i)} = \left\{\x \mid \|\x-\x^{(i)}\|_2 \leq \|\x-\x^{(j)}\|_2,
	\quad \forall j\neq i\right\}
	\end{equation}
	the ordinary Voronoi cell associated with $\x^{(i)}$, and the set
	$\mathcal{V}(X)= \{V^{(1)},\dots, V^{(n)}\}$ the ordinary
	Voronoi diagram generated by $X$.
\end{definition}

\begin{theorem}
	The output of the FNN \eqref{output} is a piecewise constant
	approximation to the function $f(x)$ using the training data set
	$\left\{\x^{(k)}, f(\x^{(k)})\right\}_{k=1}^n$. More precisely, for $n\geq 2$,
	\begin{equation} \label{f_pw}
	y(\x) = \sum_{k=1}^n f(\x^{(k)}) \mathbb{I}_{V^{(k)}}(\x),
	\end{equation}
	where 
	$V^{(k)}$ is the Voronoi cell associated by the point $\x^{(k)}$, and
	$\mathbb{I}_A(\x)$ is the indicator function for a set $A$ satisfying
	$$
	\mathbb{I}_A(\x) = \left\{
	\begin{array} {ll}
	1, & \x\in A,\\
	0, & \x\notin A.
	\end{array}
	\right.
	$$
	%For the trivial case $n=1$, $y(\x) = f(\x^{(1)})=const$.
\end{theorem}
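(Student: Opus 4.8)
The plan is to trace a generic point $\x \in D$ through the network layer by layer and show that $y(\x)$ equals the claimed piecewise constant value. The key observation driving everything is the geometric meaning of the weights $\w_{k,j}$ and thresholds $b_{k,j}$ in \eqref{w1}: for a fixed pair $(k,j)$ with $j\neq k$, the condition $\w_{k,j}\cdot\x - b_{k,j} \geq 0$ can be rewritten, by expanding the dot product and completing the square, as $\|\x - \x^{(j)}\|_2^2 \geq \|\x - \x^{(k)}\|_2^2$, i.e. $\x$ is at least as close to $\x^{(k)}$ as it is to $\x^{(j)}$. Hence $z^{(1)}_{k,j}(\x) = 1$ precisely when $\x$ lies in the closed half-space on the $\x^{(k)}$-side of the perpendicular bisector of $\x^{(k)}$ and $\x^{(j)}$, and $z^{(1)}_{k,j}(\x) = 0$ otherwise.

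Next I would analyze the second hidden layer. By \eqref{z2}--\eqref{w2}, $z^{(2)}_k(\x) = 1$ if and only if $\sum_{j\neq k} z^{(1)}_{k,j}(\x) \geq n-1$, which, since each summand is in $\{0,1\}$ and there are exactly $n-1$ of them, happens if and only if $z^{(1)}_{k,j}(\x) = 1$ for every $j\neq k$. By the first step, this is exactly the statement that $\|\x - \x^{(k)}\|_2 \leq \|\x - \x^{(j)}\|_2$ for all $j\neq k$, which is precisely the defining condition \eqref{Voronoi} of the Voronoi cell $V^{(k)}$. Therefore $z^{(2)}_k(\x) = \mathbb{I}_{V^{(k)}}(\x)$ for each $k = 1,\dots,n$, and substituting this into the output formula \eqref{output} with $f^{(k)} = f(\x^{(k)})$ (the noise-free data case considered in the statement) immediately gives \eqref{f_pw}.

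The one point that requires a little care — and which I expect to be the main (if modest) obstacle — is that the $V^{(k)}$ as defined in \eqref{Voronoi} are \emph{closed} cells, so they overlap on the boundaries (the perpendicular bisector hyperplanes), and correspondingly $z^{(2)}_k(\x)$ can equal $1$ for more than one $k$ at such boundary points. Thus the right-hand side of \eqref{f_pw} is, strictly speaking, a sum of indicator functions of possibly overlapping sets, and one should note that \eqref{output} then returns the sum of the corresponding sample values there (or, if one prefers a genuine function, the cells can be disambiguated by a tie-breaking rule, e.g. assigning each boundary point to the cell of smallest index, without affecting the construction). Away from this measure-zero set of hyperplane intersections, exactly one $z^{(2)}_k(\x)$ is nonzero and $y(\x) = f(\x^{(k)})$ for the unique nearest sample, so $y$ is genuinely piecewise constant on the Voronoi tessellation. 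I would state this caveat briefly and then conclude that \eqref{f_pw} holds in the sense described, completing the proof.
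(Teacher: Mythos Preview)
Your proposal is correct and follows essentially the same route as the paper: identify the half-space condition $\w_{k,j}\cdot\x \ge b_{k,j}$ with $\|\x-\x^{(k)}\|_2 \le \|\x-\x^{(j)}\|_2$, observe that the second layer fires exactly when all $n-1$ such conditions hold (i.e. on $V^{(k)}$), and substitute into \eqref{output}. Your extra remark about overlap of the closed Voronoi cells on the bisector hyperplanes is a valid caveat that the paper simply does not address.
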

\begin{proof}
	For earch $\x^{(k)}$, $k=1,\dots, n$, $n\geq 2$, consider $\x^{(j)}$,
	$j\neq k$.
	Let
	$$
	\x^{mid}_{k.j} = \frac{1}{2}(\x^{(k)} + \x^{(j)})
	$$
	be the mid-point between the two points $\x^{(k)}$ and
	$\x^{(j)}$. Then, the output of the first hidden layer \eqref{z1}
	$z_{k,j}^{(1)} = 1$, if and only if,
	$$
	(\x^{(k)} - \x^{(j)})\cdot \x \geq  (\x^{(k)} - \x^{(j)})\cdot
	\x^{mid}_{j,k},
	$$
	which is equivalent to
	\begin{equation} \label{closer} 
	(\x-\x^{mid}_{j,k} )\cdot (\x^{(k)} - \x^{(j)})\geq 0.
	\end{equation}
	Note that
	$$
	\{ \x \mid (\x-\x^{mid}_{j,k} )\cdot (\x^{(k)} - \x^{(j)})= 0\}
	$$
	is the center hyperplane separating the two points $\x^{(k)}$ and
	$\x^{(j)}$. Therefore, \eqref{closer} contains all the points that are
	closer to $\x^{(k)}$ than to $\x^{(j)}$. We then have, for each $1\leq
	k\leq n$ and $j\neq k$, 
	$$
	\left\{\x \mid z_{k,j}^{(1)}(\x) = 1\right\} =
	\left\{\x \mid \|\x-\x^{(k)}\|_2\leq \|\x-\x^{(j)}\|_2\right\}.
	$$
	
	The output of the $k$th neuron in the second hidden layer \eqref{z2}
	satisfies
	$$
	z_k^{(2)} = 1, \quad\textrm{if and only if,}\quad
	\z_k^{(1)} = \mathbf{1},
	$$
	where again $\mathbf{1} = (1,\dots,1)^T \in\R^{n-1}$ is a vector of
	length $(n-1)$. This is equivalent to $z_{k,j}^{(1)}=1$, $\forall
	j\neq k$. Therefore,
	$$
	\left\{\x \mid z_{k,j}^{(2)}(\x) = 1\right\} =
	\left\{\x \mid \|\x-\x^{(k)}\|_2 \leq \|\x-\x^{(j)}\|_2, \forall
	j\neq k\right\} = V^{(k)},
	$$
	which is, by definition, the Voronoi cell associated with the point
	$\x^{(k)}$.
	The output of the network \eqref{output} is obviously a piecewise
	constant function in the form of \eqref{f_pw}.
\end{proof}

If one assumes certain differentiability condition on the target function $f$,
then  we have the following result on the error estimate in $L^2$
norm. More specifically, we define the $L^2$ norm with respect to a
measure $\mu(\x)$,
\begin{equation}
\|f\|_{L^2_{d\mu}(D)} := \left(\int_D f^2(\x) d\mu(\x)\right)^{1/2}.
\end{equation}
We assume that the volume of $D$ with respect to $\mu$ is a finite
constant, and without loss of generality, we set this constant to be
1. That is,
$$
\int_D d\mu(\x) =1.
$$
\begin{theorem} \label{thm:error}
	Assume $f({\bf x})$ is
	differentiable and with bounded first-order derivatives, and
	the measure $\mu$ is such that
	$$
	\int_D \|\x\|_2^2 d\mu(\x) < \infty.
	$$
	Then, the approximation error of the FNN \eqref{output} satisfies
	\begin{align}\label{eq:L2errnew}
	\left\| y-f \right\|_{L_{d\mu}^2(D)} \le 
	\left( \mathop {\sup }\limits_{ {\bf x} \in D } \big\| \nabla f ({\bf x})\big\|_2 \right) \left( \sum_{i=1}^n \int_{V^{(i)}} \big\| {\bf x}^{(i)} - {\bf x} \big\|_2^2  d \mu({\bf x}) \right)^{\frac12}.
	%\delta \left( \sup_{{\bf x}\in D} \| \nabla f ({\bf x}) \|_2 \sqrt{ \mu (D) } \right),
	\end{align}
\end{theorem}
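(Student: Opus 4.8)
The plan is to reduce the global $L^2$ error to a sum of local errors over the Voronoi cells and then estimate each local error by a mean-value argument. By the previous theorem, $y(\x)=\sum_{k=1}^n f(\x^{(k)})\,\mathbb{I}_{V^{(k)}}(\x)$. The cells $V^{(1)},\dots,V^{(n)}$ cover $D$, and for $i\neq j$ the intersection $V^{(i)}\cap V^{(j)}$ is contained in the bisecting hyperplane of $\x^{(i)}$ and $\x^{(j)}$, hence is $\mu$-negligible (this is automatic when $\mu$ is absolutely continuous with respect to Lebesgue measure, which is the only extra regularity of $\mu$ needed beyond the stated moment condition). Consequently
\[
\|y-f\|_{L^2_{d\mu}(D)}^2=\int_D |y(\x)-f(\x)|^2\,d\mu(\x)=\sum_{i=1}^n\int_{V^{(i)}}|f(\x^{(i)})-f(\x)|^2\,d\mu(\x).
\]

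Next I would use the convexity of the Voronoi cells. Each $V^{(i)}$ is an intersection of half-spaces (see \eqref{Voronoi}), hence convex, and it contains its own generator $\x^{(i)}$; therefore for every $\x\in V^{(i)}$ the segment $\x+t(\x^{(i)}-\x)$, $t\in[0,1]$, lies entirely in $V^{(i)}\subseteq D$, where $f$ is differentiable. The fundamental theorem of calculus then gives
\[
f(\x^{(i)})-f(\x)=\int_0^1 \nabla f\big(\x+t(\x^{(i)}-\x)\big)\cdot(\x^{(i)}-\x)\,dt,
\]
and the Cauchy--Schwarz inequality yields the pointwise bound
\[
|f(\x^{(i)})-f(\x)|\le \Big(\sup_{\z\in D}\|\nabla f(\z)\|_2\Big)\,\|\x^{(i)}-\x\|_2,\qquad \x\in V^{(i)}.
\]

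Substituting this into each summand, pulling the supremum out of all the integrals, and taking the square root produces exactly \eqref{eq:L2errnew}. The right-hand side is finite because $\|\x^{(i)}-\x\|_2^2\le 2\|\x^{(i)}\|_2^2+2\|\x\|_2^2$, $\int_D d\mu(\x)=1$, and $\int_D\|\x\|_2^2\,d\mu(\x)<\infty$ by hypothesis. There is no serious obstacle here: the argument is a standard quantitative estimate for piecewise-constant interpolation. The only steps that genuinely require attention are the convexity of the Voronoi cells, which is what makes the segment from $\x$ to $\x^{(i)}$ admissible so that the gradient bound along it is legitimate, and the elementary measure-theoretic remark that the cells overlap only on a $\mu$-null set, so that the additive decomposition of the squared norm is valid.
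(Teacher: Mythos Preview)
Your argument is correct and follows essentially the same route as the paper: decompose the squared $L^2_{d\mu}$ error over the Voronoi cells, write $f(\x^{(i)})-f(\x)$ as a line integral of $\nabla f$, apply Cauchy--Schwarz, and pull out the supremum of $\|\nabla f\|_2$. The paper's proof is in fact slightly terser---it does not comment on the $\mu$-negligibility of the cell overlaps, the convexity of the cells, or the finiteness of the right-hand side---so your additional remarks are refinements rather than a different strategy.
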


\begin{proof}
	From \eqref{f_pw}, we have
	\begin{equation*}
	\begin{split}
	\left\| y-f \right\|_{L_{d\mu}^2(D)}^2 
	& =   \sum_{i=1}^n  \int_{V^{(i)}} \big| f({\bf x}^{(i)}) - f({\bf x}) \big|^2
	d \mu({\bf x}) \\
	&= \sum_{i=1}^n  \int_{V^{(i)}} \bigg | \int_0^1  ( {\bf x}^{(i)} - {\bf x} ) \cdot \nabla f( t {\bf x}^{(i)} + ( 1- t ) {\bf x}) dt \bigg|^2 d \mu({\bf x})
	\\
	& \le \sum_{i=1}^n  \int_{V^{(i)}}  \bigg( \big\| {\bf x}^{(i)} - {\bf x} \big\|_2  \int_0^1  \big\| \nabla f( t {\bf x}^{(i)} + ( 1- t ) {\bf x}) \big\|_2   dt \bigg)^2 d \mu({\bf x})
	\\
	& \le \sum_{i=1}^n \left( \mathop {\sup }\limits_{ {\bf x} \in V^{(i)} }  \big\| \nabla f ({\bf x})\big\|_2 \right)^2 \int_{V^{(i)}} \big\| {\bf x}^{(i)} - {\bf x} \big\|_2^2  d \mu({\bf x})
	\\
	&= \left( \mathop {\sup }\limits_{ {\bf x} \in D } \big\| \nabla f ({\bf x})\big\|_2 \right)^2 \sum_{i=1}^n \int_{V^{(i)}} \big\| {\bf x}^{(i)} - {\bf x} \big\|_2^2  d \mu({\bf x}).
	\end{split}
	\end{equation*}
	The proof is then completed.
	%which implies \eqref{eq:L2err}. %Similar arguments yield \eqref{eq:Linferr}.  
\end{proof}

\begin{corollary}
	Under the hypotheses of Theorem \ref{thm:error}, if $D$ is bounded and 
	$\mu$ is uniform measure, then 
	\begin{align}\label{eq:L2err}
	\left\| y-f \right\|_{L_{d\mu}^2(D)} \le 
	\left( \sup_{{\bf x}\in D} \| \nabla f ({\bf x}) \|_2 \right) \delta,
	\end{align}	
	where 
	$$
	\delta := \max_{1\le i \le n} {\rm diam} (V^{(i)}),
	$$
	with
	${\rm diam}(A):=\sup_{{\bf x},{\bf
			x}' \in A} \|{\bf x}-{\bf x}'\|_2$ denoting the diameter of a bounded set $A$.	
\end{corollary}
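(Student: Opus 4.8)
The plan is to derive the bound directly from estimate \eqref{eq:L2errnew} of Theorem \ref{thm:error}, by controlling each local second moment $\int_{V^{(i)}} \| {\bf x}^{(i)} - {\bf x} \|_2^2\, d\mu({\bf x})$ via the diameter of the cell $V^{(i)}$.

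First I would note that the generating point ${\bf x}^{(i)}$ lies in its own Voronoi cell $V^{(i)}$: indeed $\| {\bf x}^{(i)} - {\bf x}^{(i)} \|_2 = 0 \le \| {\bf x}^{(i)} - {\bf x}^{(j)} \|_2$ for all $j \neq i$, so ${\bf x}^{(i)}$ satisfies the defining inequalities \eqref{Voronoi}. Hence for every ${\bf x} \in V^{(i)}$ both ${\bf x}$ and ${\bf x}^{(i)}$ belong to $V^{(i)}$, so $\| {\bf x}^{(i)} - {\bf x} \|_2 \le {\rm diam}(V^{(i)}) \le \delta$, where $\delta < \infty$ because $D$ is bounded and each $V^{(i)} \subseteq D$. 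Integrating this pointwise bound over $V^{(i)}$ gives $\int_{V^{(i)}} \| {\bf x}^{(i)} - {\bf x} \|_2^2\, d\mu({\bf x}) \le \delta^2\, \mu(V^{(i)})$ for each $i$.

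Next I would sum over $i = 1,\dots,n$. Since the Voronoi cells cover $D$ while their pairwise intersections are contained in finitely many hyperplanes, which carry zero mass under the (normalized) uniform measure $\mu$, one has $\sum_{i=1}^n \mu(V^{(i)}) = \mu(D) = 1$, using the normalization $\int_D d\mu({\bf x}) = 1$. Therefore $\sum_{i=1}^n \int_{V^{(i)}} \| {\bf x}^{(i)} - {\bf x} \|_2^2\, d\mu({\bf x}) \le \delta^2$. Substituting this into \eqref{eq:L2errnew} and taking square roots yields \eqref{eq:L2err}.

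I do not expect a genuine obstacle; the argument is routine given Theorem \ref{thm:error}. The only point needing a bit of care is the additivity $\sum_{i} \mu(V^{(i)}) = \mu(D)$, which rests on the overlaps of the Voronoi cells being $\mu$-null --- automatic here since $\mu$ is uniform and those overlaps lie on finitely many hyperplanes. (For a general measure one would instead require $\mu$ to vanish on those hyperplanes, or retain the slightly coarser bound written in terms of $\sum_i \mu(V^{(i)})$; note that the decomposition already used in the proof of Theorem \ref{thm:error} implicitly relies on exactly this fact.)
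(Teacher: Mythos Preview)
Your argument is correct and follows essentially the same route as the paper: bound $\|{\bf x}^{(i)}-{\bf x}\|_2$ by $\delta$ on each cell, integrate to get $\sum_i \int_{V^{(i)}} \|{\bf x}^{(i)}-{\bf x}\|_2^2\, d\mu \le \delta^2 \sum_i \mu(V^{(i)}) = \delta^2$, and substitute into \eqref{eq:L2errnew}. You simply spell out the details (that ${\bf x}^{(i)}\in V^{(i)}$ and that the cell overlaps are $\mu$-null) which the paper leaves implicit.
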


\begin{proof}
	The conclusion directly follows from Theorem \ref{thm:error}, because 
	\begin{equation*}
	\sum_{i=1}^n \int_{V^{(i)}} \big\| {\bf x}^{(i)} - {\bf x} \big\|_2^2  d \mu({\bf x}) \le \delta^2  \sum_{i=1}^n \int_{ V^{(i)} }  d \mu({\bf x}) =  \delta^2.
	\end{equation*}
\end{proof}

Note that, if the training data points $\{ {\bf x}^{(j)} \}_{j=1}^n$
are (almost) uniformly distributed in the bounded domain $D$, then 
$
\delta \sim {n^{-\frac1d}},
%=\big( n^2 \big)^{-\frac1{2d}}
$
c.f., \cite{devroye2017measure}. 
This implies that the error of our FNN scales as 
\begin{equation} \label{rate}
\left\|
y-f \right\|_{L_{d\mu}^2(D)} \sim {\mathcal O}( {n^{-\frac1d}}).
\end{equation}

\section{Numerical Examples} \label{sec:examples}

In this section we present numerical examples to demonstrate the
properties of our FNN.
Upon explicitly constructing the FNN using training data, all numerical errors
are computed using another set of samples --- a validation sample set. In all our examples, the
validation set consists of $M$ randomly generated points that are
independent of the training set. 
The size $M$ is taken as $200$ 
and $10,000$, for univariate
and multivariate tests, respectively.
We compute the differences between the
FNN approximations and the true functions on the validation sets and
report both their $\ell_\infty$ vector norm and $\ell_2$ vector norm,
denoted as $\epsilon_\infty$ error and $\epsilon_2$ error, respectively.

\subsection{Univariate functions}

We first consider a simple smooth function $f(x) = \sin(4\pi x)$,
$x\in [0,1]$. Fig. \ref{fig:Cosine_1d} shows the FNN approximation of
this function with $n=32$ randomly generated training samples. The
numerical approximation by the FNN is clearly a piecewise constant
approximation of the exact function.
\begin{figure}[htbp]
	\centering
	{\includegraphics[width=10cm]{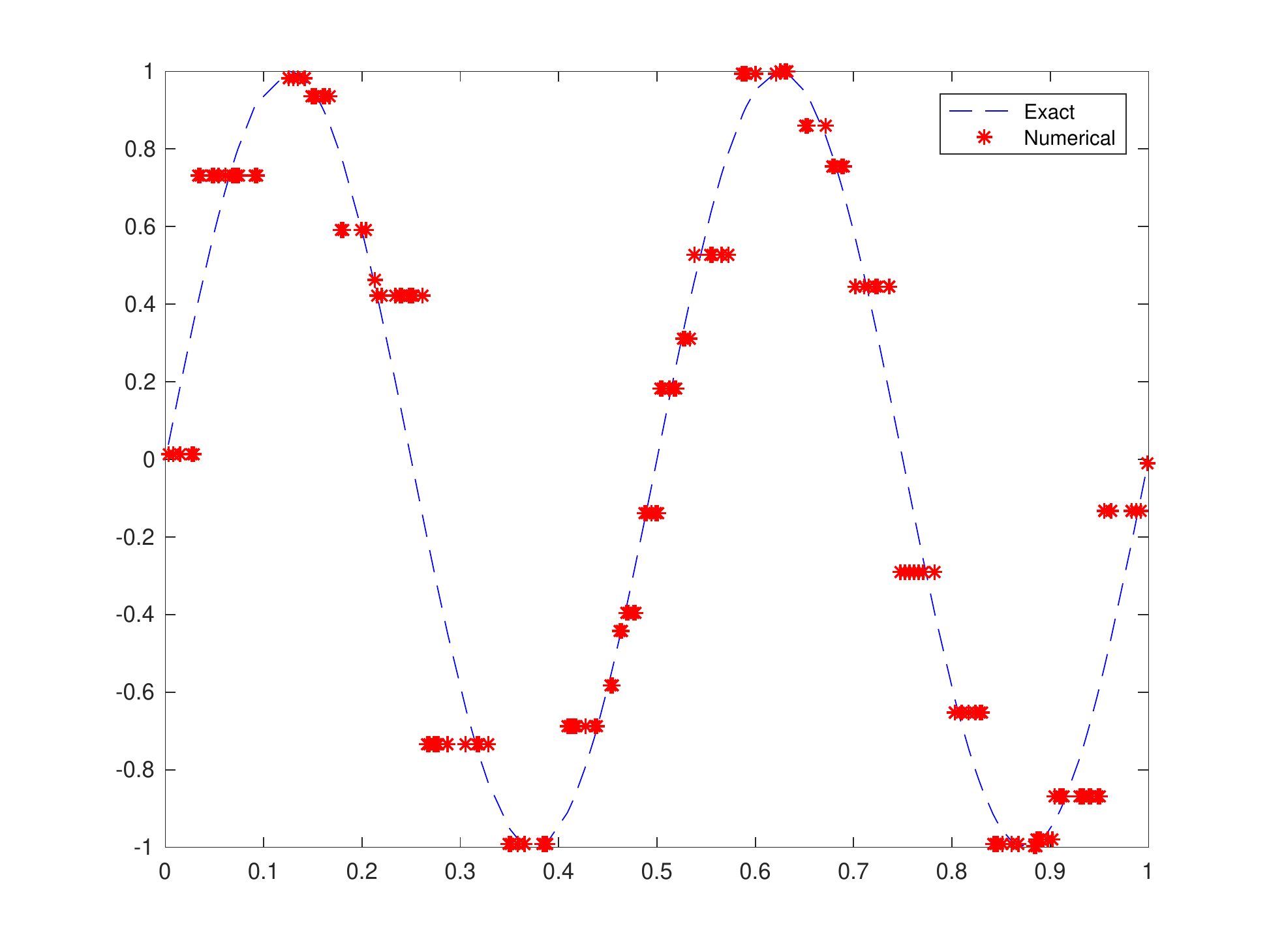}}
	%{\includegraphics[width=0.49\textwidth]{Figures/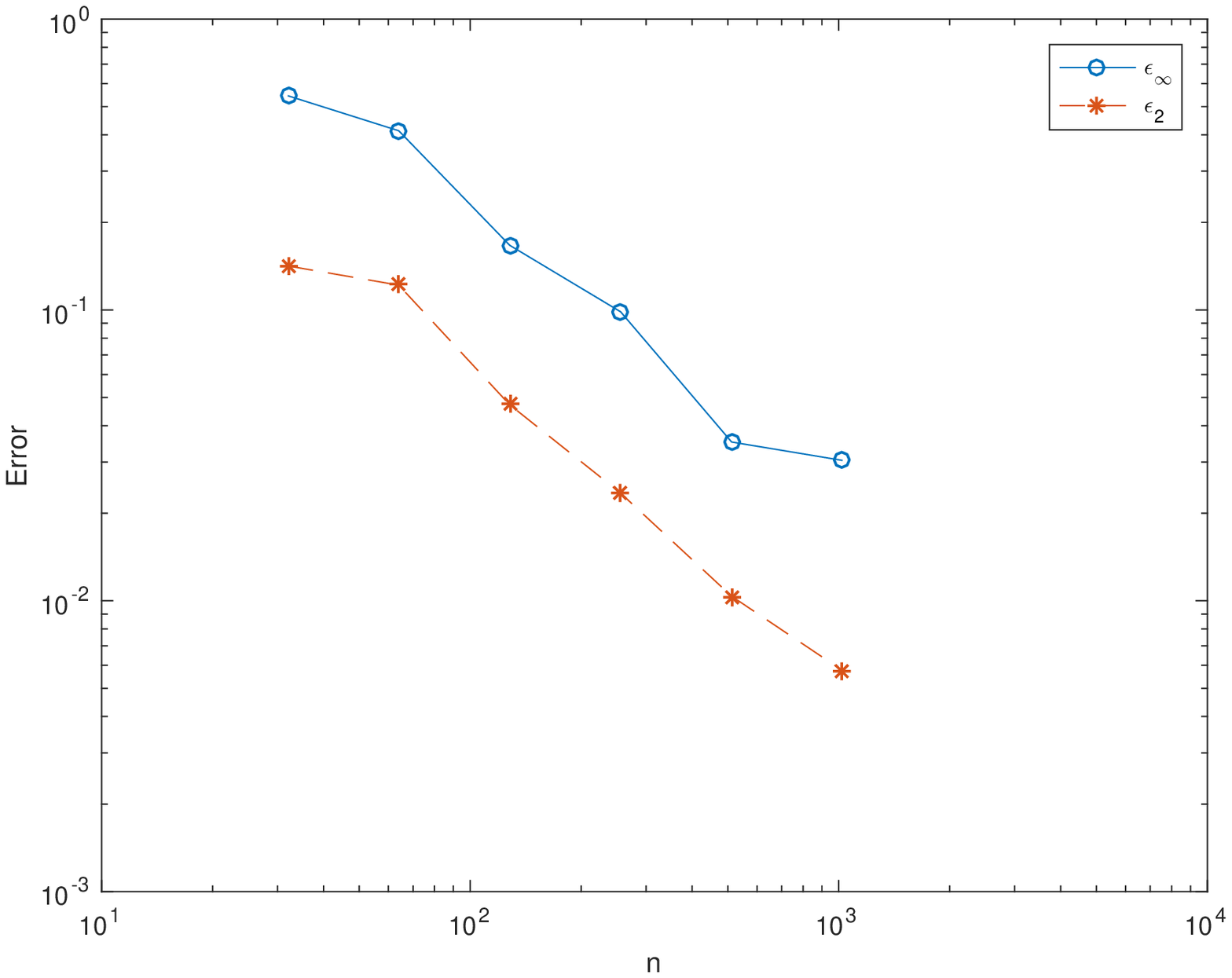}}
	\caption{Approximation of $f(x)=\cos(4\pi x)$ using $n=32$ random training
		data.}
	\label{fig:Cosine_1d}
\end{figure}

We then examine the errors in the FNN approximation with respect to
increasing number of the training data $n$. The errors are shown in
Fig. \ref{fig:Error_1d}, using both uniformly distributed training
data (left figure) and randomly generated training data (right
figure). The $n^{-1}$ error convergence rate is clearly visible,
consistent with the estimate in Theorem \ref{thm:error}.
\begin{figure}[htbp]
	\centering
	{\includegraphics[width=0.49\textwidth]{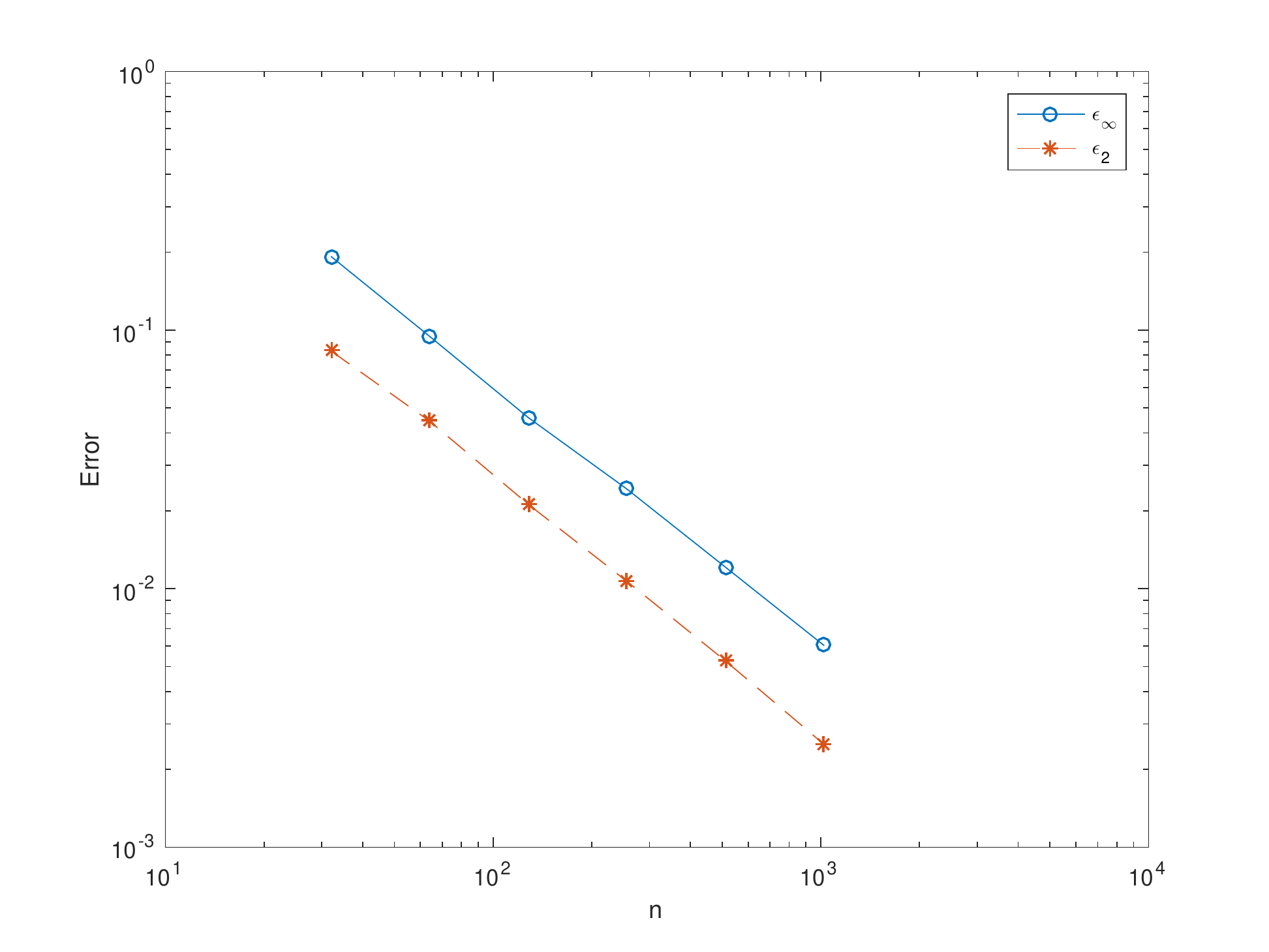}}
	{\includegraphics[width=0.49\textwidth]{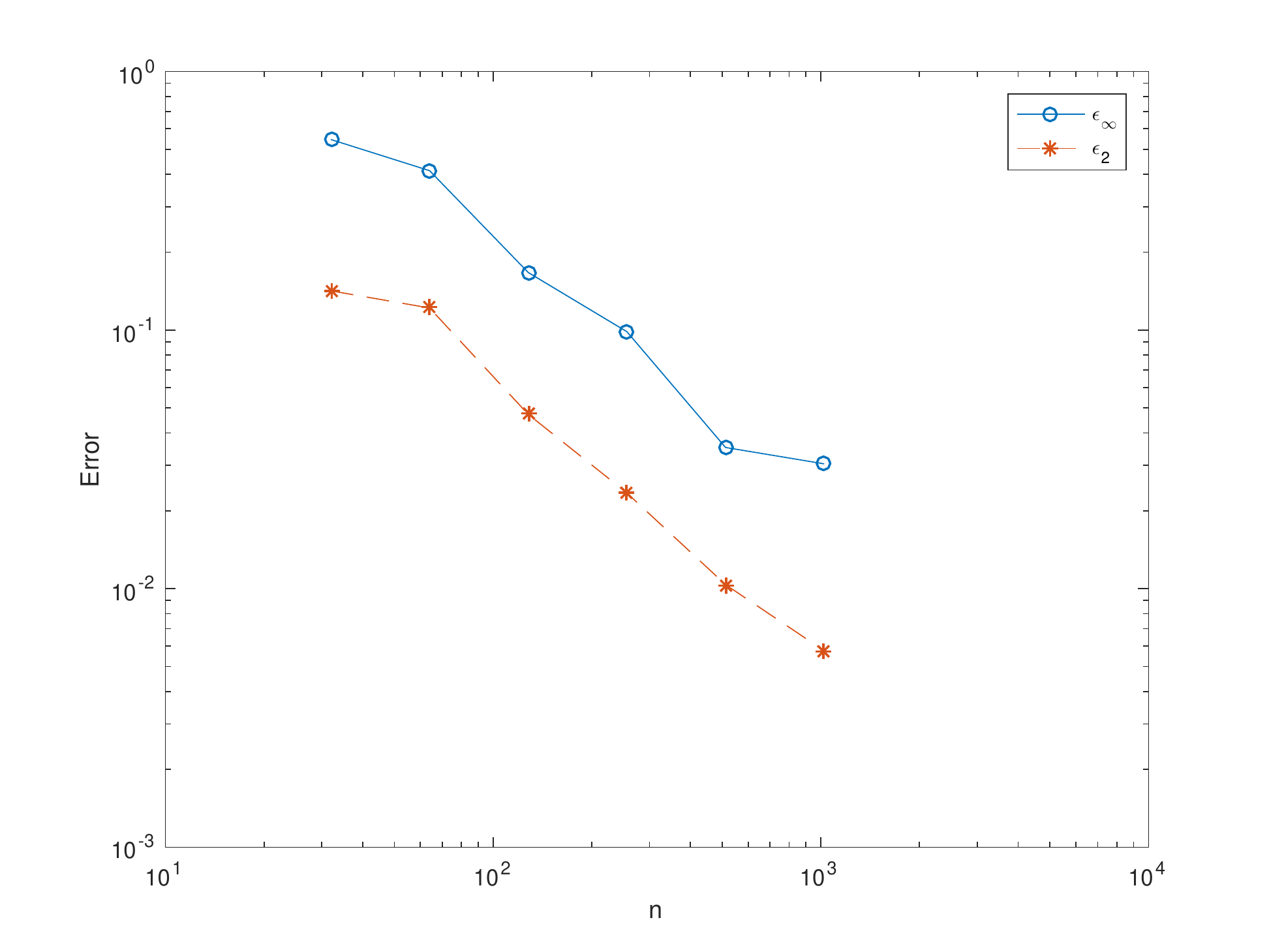}}
	\caption{Errors {\em vs.} $n$ for 1d cosine function. Left: uniform training data;
		Right: random training data.}
	\label{fig:Error_1d}
\end{figure}

Next we consider a univariate discontinuous function, which is rather
arbitrarily chosen as
$$
f(x) = 3s(x-0.313)+ s(x-0.747)+ 2\cos(4\pi x), \qquad x\in [0,1],
$$
where $s(x)$ is the step function \eqref{step}. 
The left of Fig. \ref{fig:1d_jump} shows the FNN approximation using
$n=1,024$ uniformly distributed training samples, whereas the right of Fig. \ref{fig:1d_jump}
shows the error decay with respect to the number of training
samples. We observe good approximation property and expected
convergence rate from \eqref{rate}.
\begin{figure}[htbp]
	\centering
	{\includegraphics[width=0.49\textwidth]{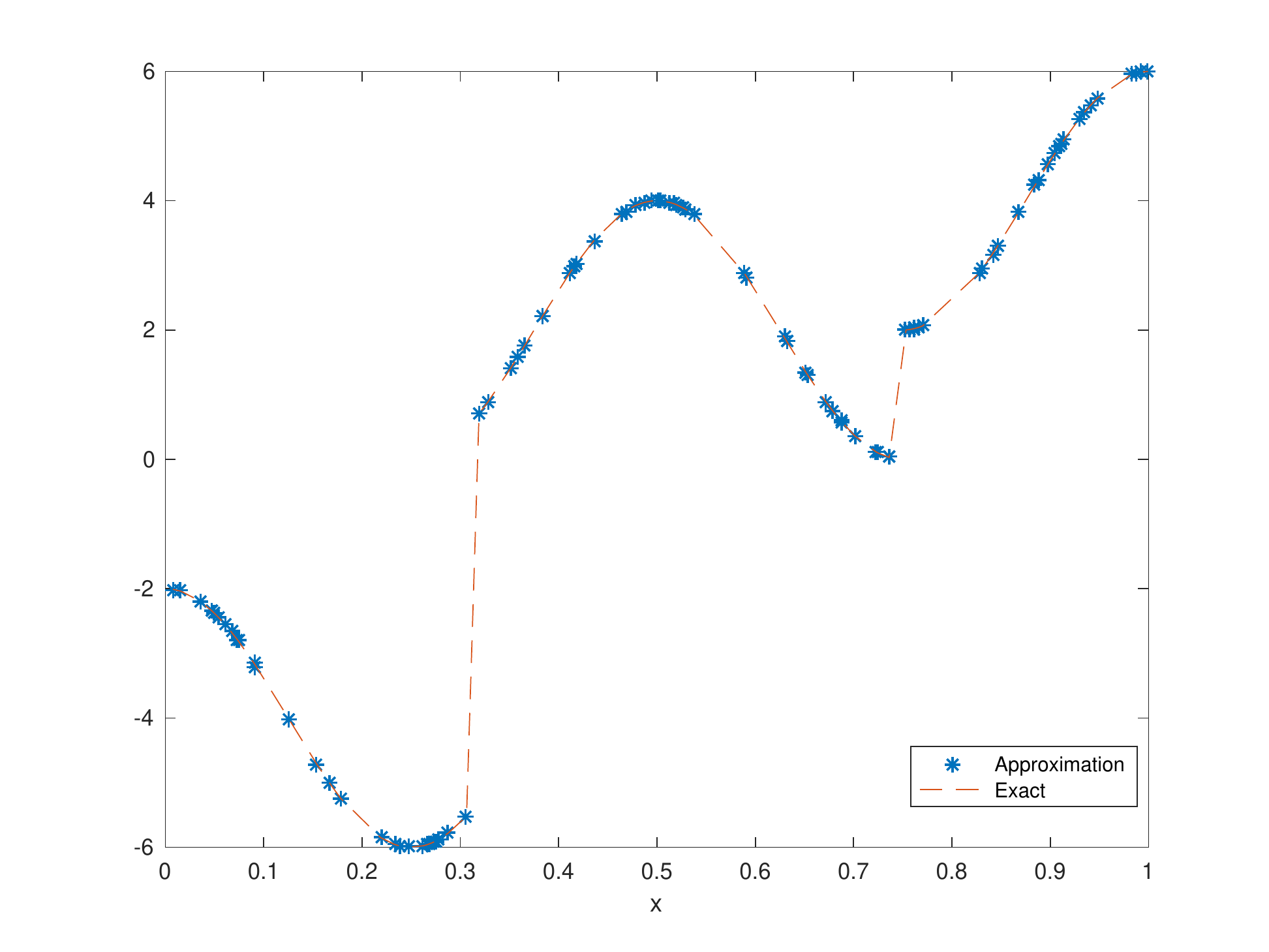}}
	{\includegraphics[width=0.49\textwidth]{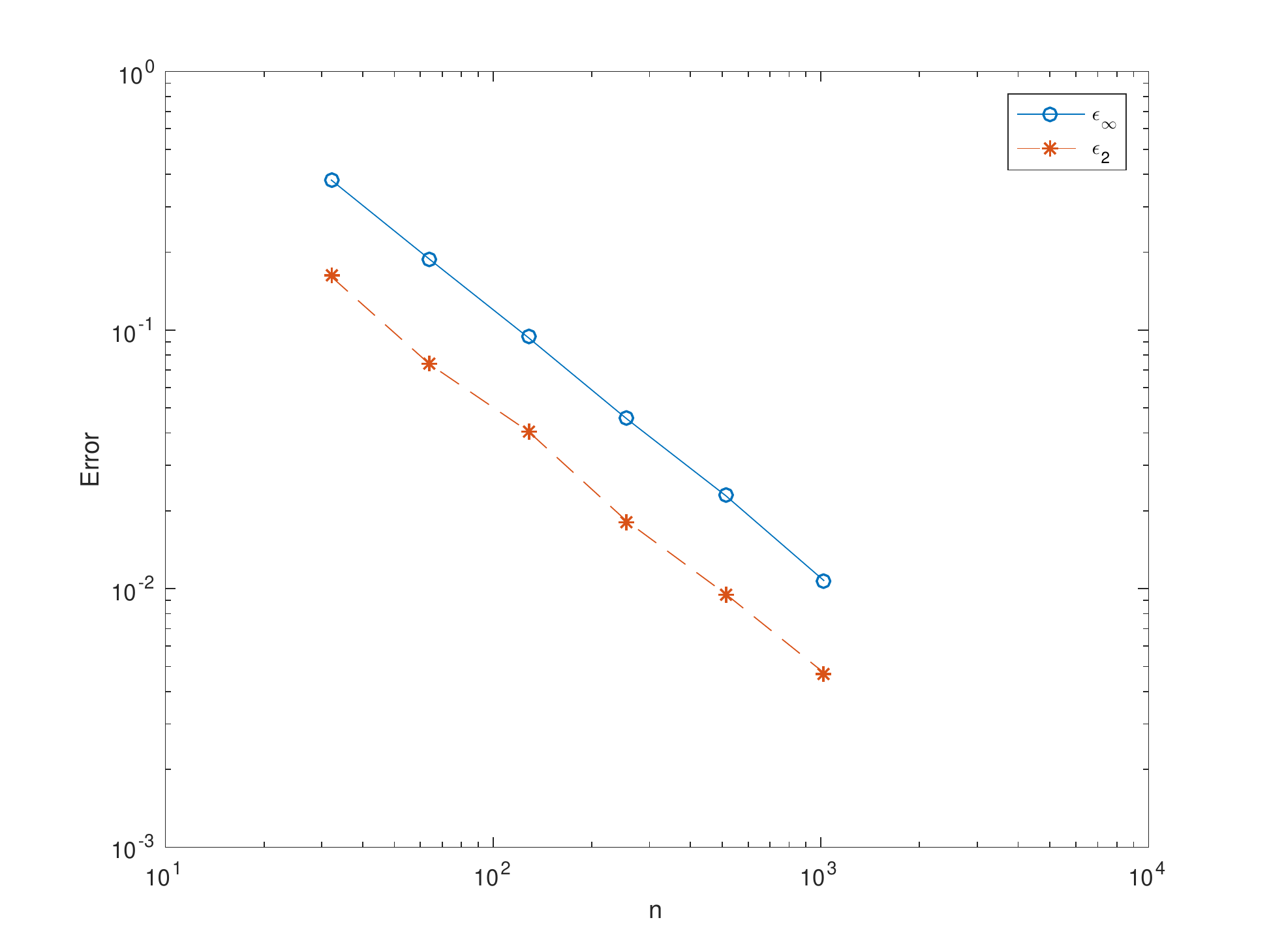}}
	\caption{Approximation of a discontinuous function in 1d with uniform
		training data. Left: Function
		approximation with $n=1,024$ training samples;
		Right: Errors {\em vs.} number of training samples $n$.}
	\label{fig:1d_jump}
\end{figure}

\subsection{Multivariate functions}

We now consider multivariate functions. 
%$f:[0,1]^d\to\R$, $d>1$. 
Throughout this section, all training samples are generated
randomly using uniform distribution. 
Two functions are
examined, a sine function and a Gaussian function, in the following form,
\begin{equation*} %\label{Multidfunc}
f_S(\x) = \sin\left( \omega \sum_{i=1}^d x_i\right),~~{\bf x} \in [0,1]^d, \quad
f_G ({\bf x}) = \exp\left(-\sum_{i=1}^d
\left(\frac{x_i}{2}\right)^2 \right),~~{\bf x} \in [-1,1]^d.
\end{equation*}

First, we consider the functions in 2-dimension ($d=2$).
%following 2-dimensional function,
%$$
%f(\x) = \sin(2\pi (x_1+x_2)), \qquad \x \in [0,1]^d.
%$$
The numerical errors induced by our FNN  for these two functions are plotted in
Fig. \ref{fig:2d}, with respect to the number of training
samples. We observe the expected rate of error convergence \eqref{rate}.
%
%\begin{figure}[htbp]
%	\centering
%	\includegraphics[width=0.618\textwidth]{Figures/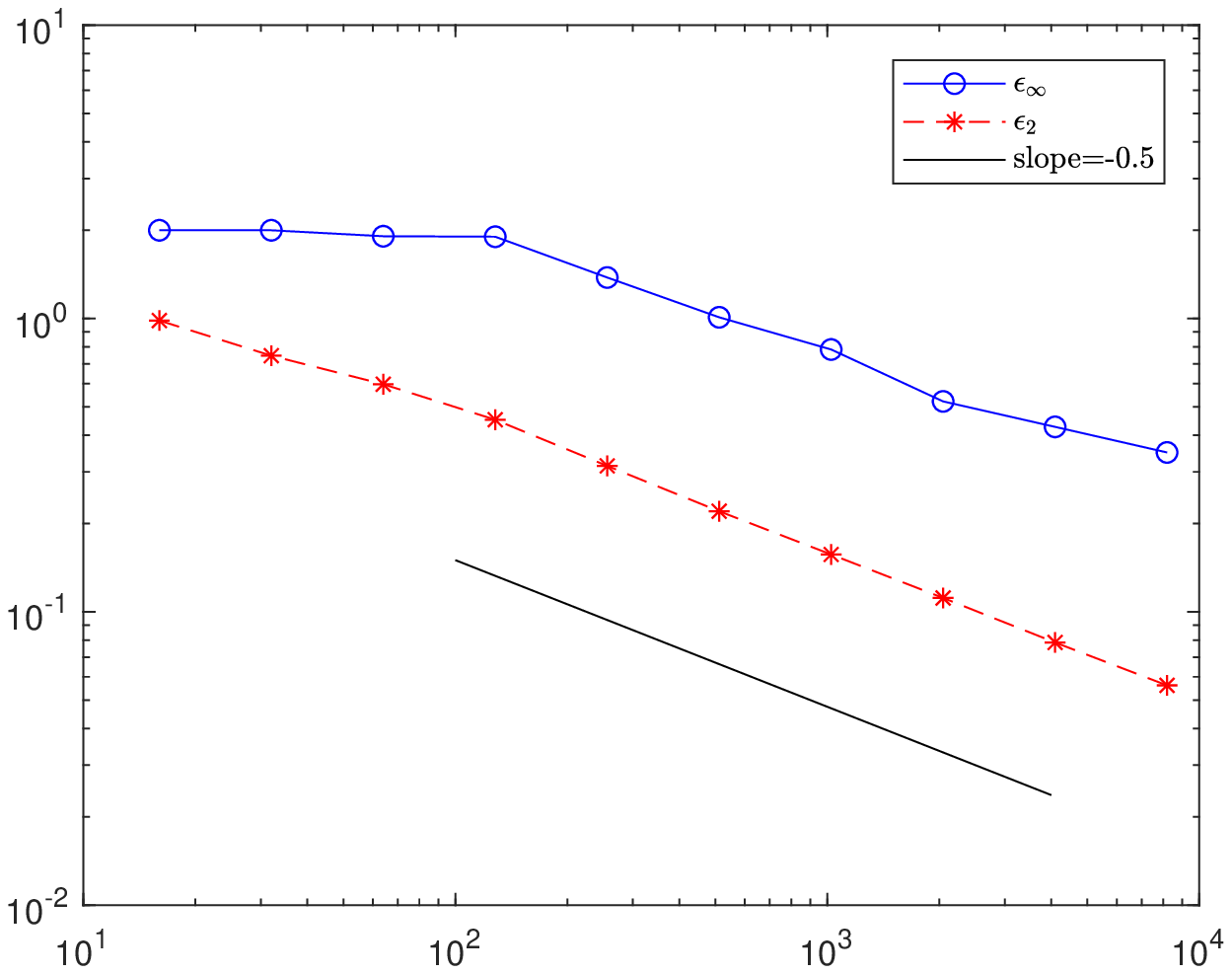}
%	\caption{Approximation of $\sin(2\pi (x_1+x_2))$ with random
%		training data: Errors {\em vs.} number of training
%                samples.}
%	\label{fig:2dsine}
%\end{figure}
\begin{figure}[htbp]
	\centering
	{\includegraphics[width=0.49\textwidth]{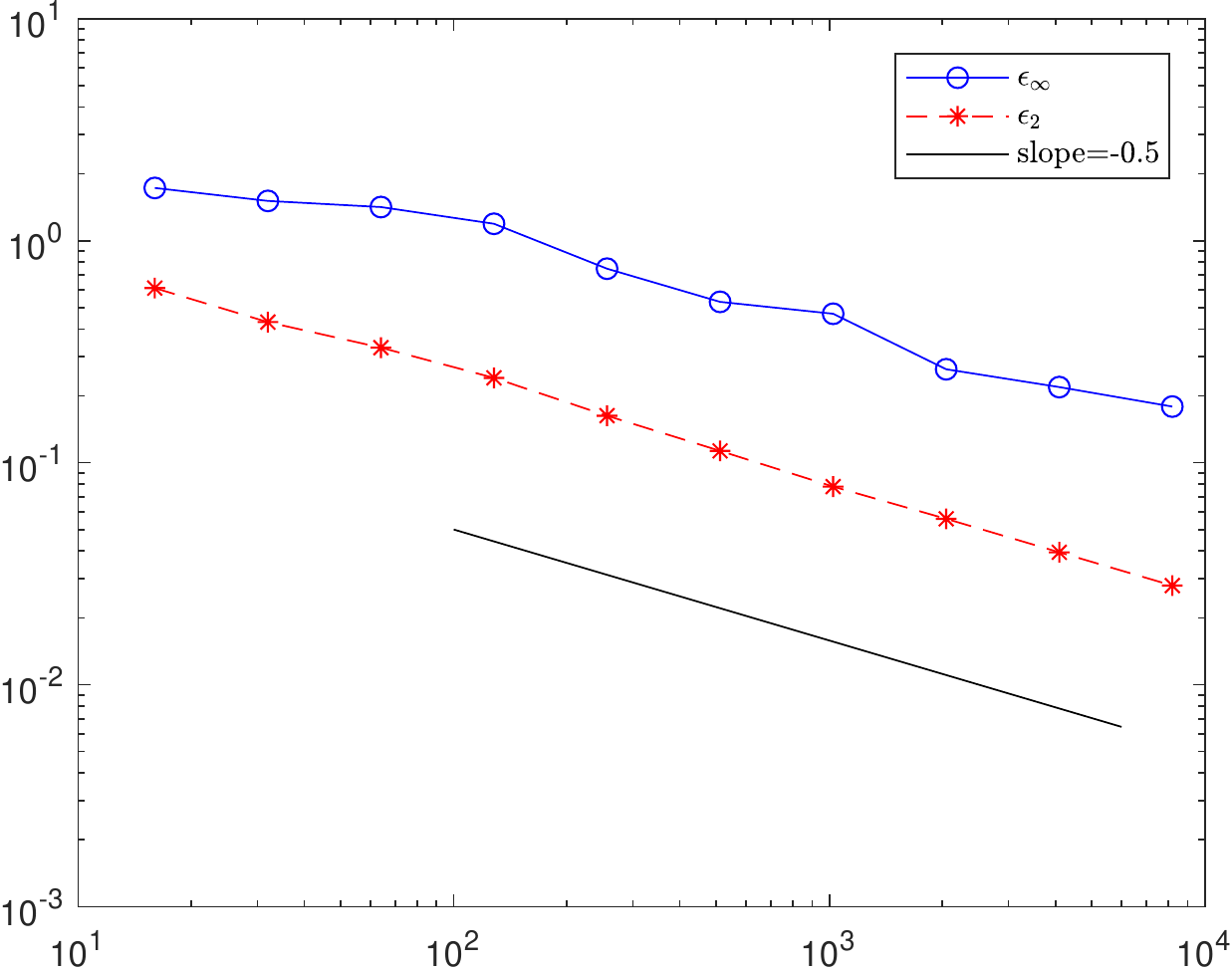}}
	{\includegraphics[width=0.49\textwidth]{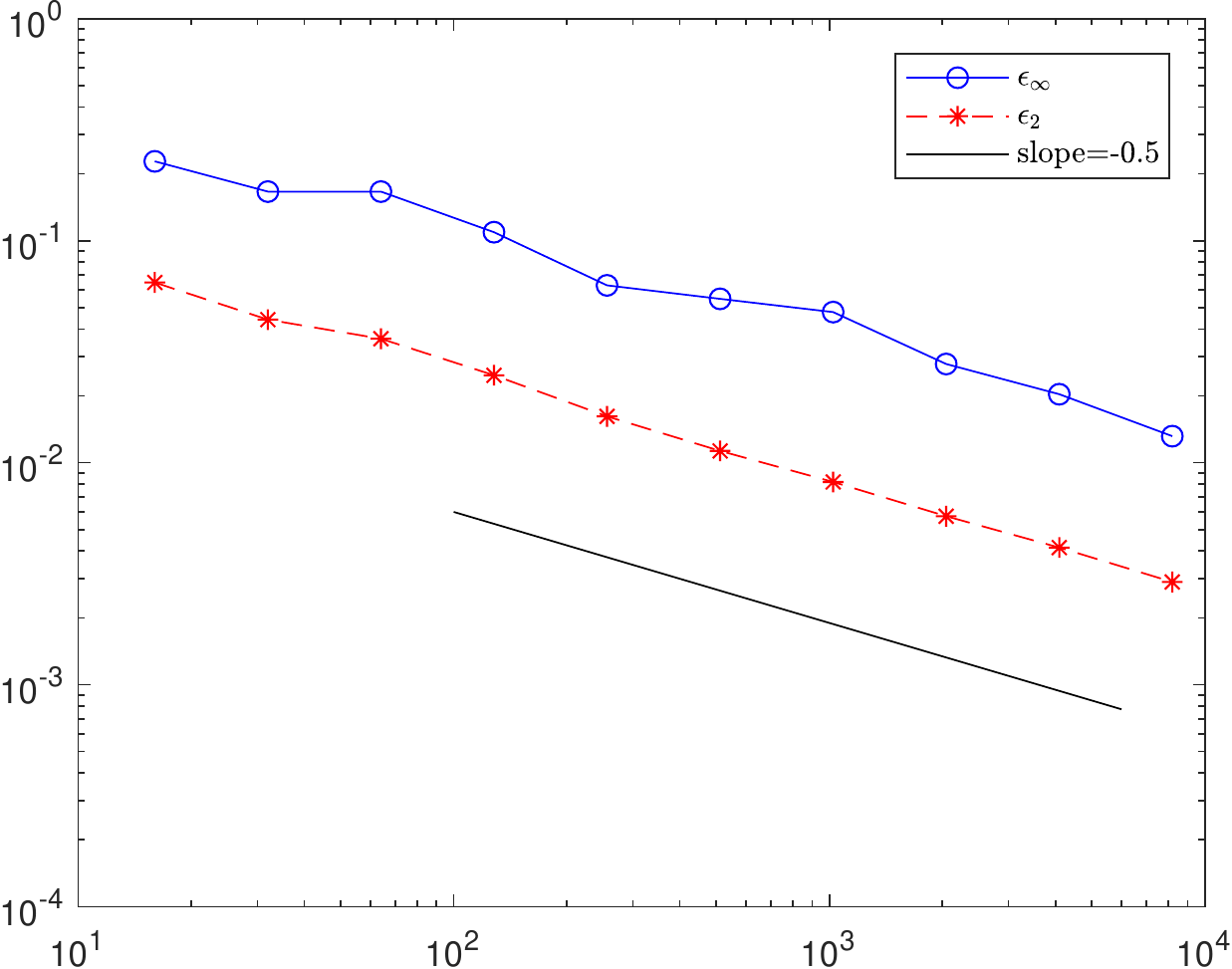}}
	\caption{Approximation errors of the sine and Gaussian functions 
		in $d=2$ versus number of training samples. Left: $f_S$ with $\omega=2\pi$;
		Right: $f_G$.}
	\label{fig:2d}
\end{figure}

Next, we consider functions in $d=4$ dimensions.
%Two functions are
%examined, a sine function and a Gaussian function, in the following form,
%\begin{equation} \label{4dfunc}
%f_S(\x) = \sin\left(\pi \sum_{i=1}^d x_i\right), \qquad
%f_G ({\bf x}) = \exp\left(-\sum_{i=1}^d
%  \left(\frac{x_i}{2}\right)^2 \right).
%\end{equation}
The numerical errors induced by our FNN are
plotted in Fig. \ref{fig:4d}, with respect to the number of training
samples. Again, we observe expected error behavior \eqref{rate}.
\begin{figure}[htbp]
	\centering
	{\includegraphics[width=0.49\textwidth]{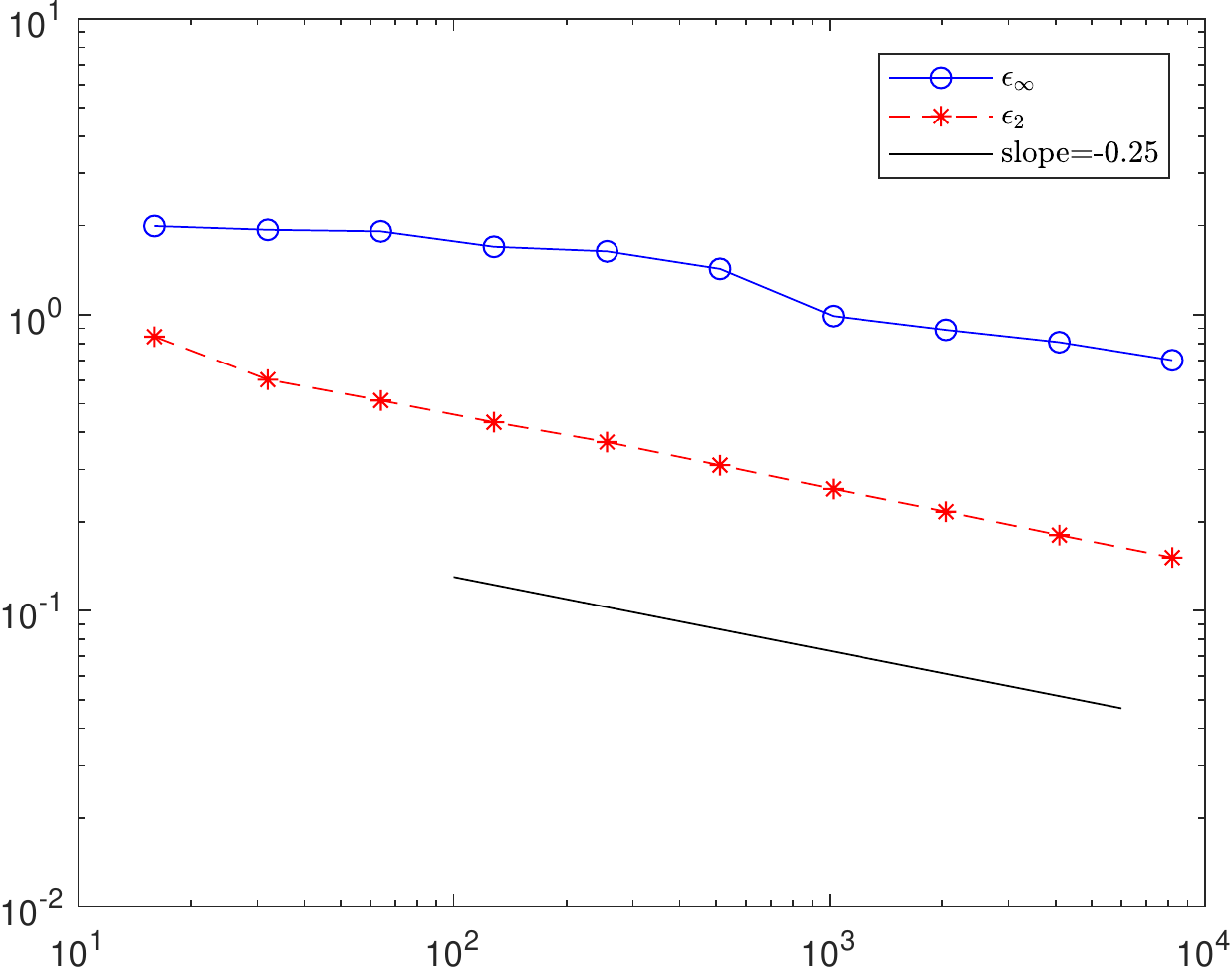}}
	{\includegraphics[width=0.49\textwidth]{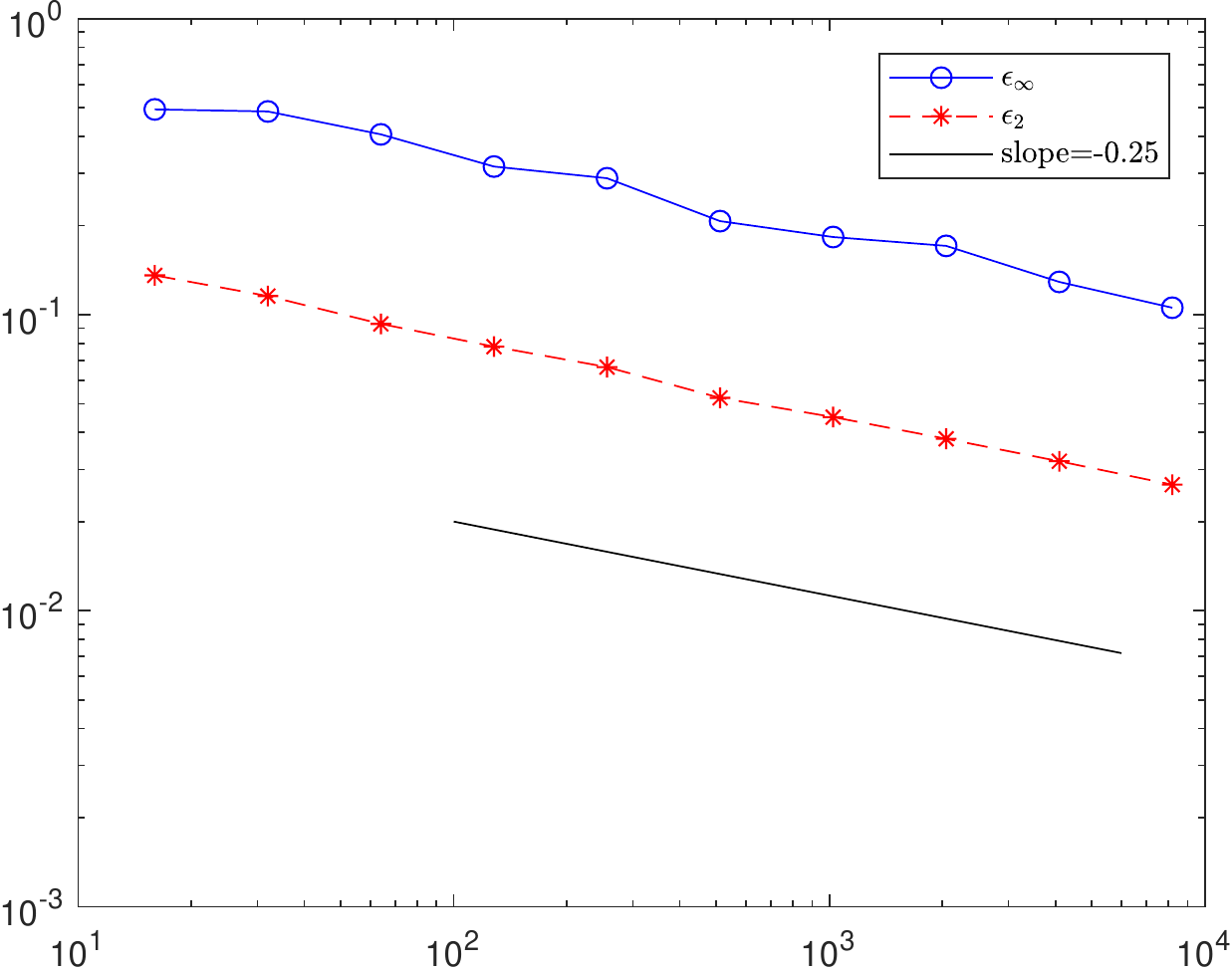}}
	\caption{Approximation errors of the sine and Gaussian functions 
		in $d=4$ versus number of training samples. Left: $f_S$ with  $\omega=\pi$;
		Right: $f_G$.}
	\label{fig:4d}
\end{figure}

%
%\begin{figure}[htbp]
%	\centering
%	{\includegraphics[width=0.49\textwidth]{Figures/4d_Sine_SIM.eps}}
%	{\includegraphics[width=0.49\textwidth]{Figures/4d_Gaussian_SIM.eps}}
%	\caption{Same as Fig. \ref{fig:4d} except for using formula \eqref{f_pw}.}
%	\label{fig:4d_s}
%\end{figure}
%
%
%\begin{figure}[htbp]
%	\centering
%	{\includegraphics[width=0.49\textwidth]{Figures/10d_Sine.eps}}
%	{\includegraphics[width=0.49\textwidth]{Figures/10d_Gaussian.eps}}
%	\caption{Approximation of $\sin(2\pi \sum_{i=1}^d x_i)$ (left)
%		and Gaussian function $a_i=1,\chi_i=0.5$ (right) in 10d with random
%		training data: Errors {\em vs.} $n$.}
%	\label{fig:10d}
%\end{figure}

%\begin{figure}[htbp]
%	\centering
%	{\includegraphics[width=0.49\textwidth]{Figures/10d_Sine_SIM.eps}}
%	{\includegraphics[width=0.49\textwidth]{Figures/10d_Gaussian_SIM.eps}}
%	\caption{Same as Fig. \ref{fig:10d} except for using formula \eqref{f_pw}.}
%	\label{fig:10d_s}
%\end{figure}

%\input Conclusion

\section{Summary} \label{sec:summary}

In this paper we presented a new explicit construction of feedforward
neural network (FNN). Our network consists of two hidden layers and
uses the step function as the activation function. It is able to
construct a Voronoi diagram of a given multivariate domain and
provides a piecewise constant approximation of any function defined
in the domain. Our construction uses $n^2$ neurons, where $n$ is the
number of training samples. It is worth noting that it is possible to
construct a similar piecewise constant approximation FNN using a
smaller number (less than $n^2$) of neurons. That, however, would require more complex
signal pathways among the neurons, with potential lateral connections
within the layers. Our current construction represents perhaps
the most straightforward construction of this type. In addition to
being another (new) constructive proof, from the mathematical view
point, for the universal approximation properties of FNNs, our
construction is also practical and can be easily adopted for applications.

%\input Appendix

%\section*{Acknowledgment}
%This work is in part supported by AFOSR, DOE, NNSA, NSF.

\bibliographystyle{plain}
\bibliography{neural}

\end{document}